\def\a{\alpha}
\def\r{\gamma}
\def\b{\beta}
\def\Z{\mathbb{Z}}
\def\C{\mathbb{C}}
\numberwithin{equation}{section}
\newtheorem{theo}{Theorem}[section]
\newtheorem{defi}[theo]{Definition}
\newtheorem{lemm}[theo]{Lemma}
\newtheorem{prop}[theo]{Proposition}
\newtheorem{clai}{Claim}
\newtheorem{rema}[theo]{Remark}
\newtheorem{case}{Case}
\begin{document}

\title[Non-weight modules over the affine-Virasoro algebra of type $A_1$]{Non-weight modules over the affine-Virasoro algebra of type $A_1$}

\author{Qiu-Fan Chen and Jian-Zhi Han}

\address{Department of Mathematics, Shanghai Maritime University,
 Shanghai, 201306, China.}\email{chenqf@shmtu.edu.cn}
\address{School of Mathematical Sciences, Tongji University,
 Shanghai, 200092, China.}\email{jzhan@tongji.edu.cn}

\subjclass[2010]{17B10, 17B35, 17B65, 17B68}

\keywords{Affine-Virasoro algebra, non-weight module, free module}

\thanks{This work is supported by National Natural Science Foundation of China (Grant Nos. 11801363, 11771279 and 11671247).}

\begin{abstract}
In this paper, we study a class of non-weight modules over the affine-Virasoro algebra of type $A_1$,  which are free modules of rank one  when restricted to the Cartan subalgebra (modulo center).  We give  the classification of such modules.  Moreover, the simplicity and the isomorphism classes of these modules are determined.

\end{abstract}

\maketitle

\section{Introduction}

The Virasoro algebra is an infinite dimensional Lie algebra over $\C$ with basis $\{d_i, C\mid i\in\Z\}$ and defining relations
\begin{equation*}
[d_i,d_j]=(j-i)d_{i+j}+\delta_{i+j,0}\frac{i^3-i}{12}C,\quad
[d_i,C]=0,\,\,\,\forall\,i,j\in\mathbb{Z},
\end{equation*}
which is the universal central extension of the so-called infinite dimensional Witt algebra of rank one. The Virasoro algebra occurs as the algebra of the conformal group in one dimension, or in the form of two commuting copies. Affine Lie algebras play an important role in string theory and two-dimensional conformal field theory. It is well known that the Virasoro algebra and the affine Lie algebras have been widely used in many physics areas and mathematical branches. Their close relationship strongly suggests that they should be considered simultaneously, that is, as one algebraic structure. Actually this has led to the definition of the so-called affine-Virasoro algebra, which is the semidirect product of the Virasoro algebra and an affine Kac-Moody Lie algebra with a common center. Affine-Virasoro algebras  are very meaningful in the sense that they are closely connected to the conformal field theory. For example, the even part of $N=3$ superconformal algebra \cite{CL} is just the affine-Virasoro algebra of type $A_1$. Highest weight representations and integrable  representations of the affine-Virasoro algebras have been  extensively studied  (cf.~\cite{EJ}, \cite{JY}-\cite{LQ},  \cite{XH}). The author in \cite{B} presented the  classification of  all simple Harish-Chandra modules with nonzero central actions over the affine-Virasoro algebras. However, all simple uniform bounded modules over these algebras are not yet classified except the affine-Virasoro algebra of type $A_1$ \cite{GHL}.

In recent years, many authors constructed various simple non-Harish-Chandra modules and simple non-weight modules (cf.~\cite{BM}, \cite{CTZ}-\cite{CY}, \cite{LGZ}-\cite{TZ}). In particular, J. Nilsson \cite{N} constructed  a class of  $\mathfrak{sl_{n+1}}$-modules that are free of rank one when restricted to the Cartan subalgebra.  This kind of non-weight modules, which many authors call $U(\mathfrak{h})$-free modules, have been extensively studied. In the paper \cite{N} and a subsequent paper \cite{N1}, J. Nilsson showed that a finite dimensional simple Lie algebra has nontrivial $U(\mathfrak{h})$-free modules if and only if it is of type $A$ or $C$. Furthermore, the $U(\mathfrak{h})$-free modules of rank one for the Kac-Moody Lie algebras were determined in \cite{CTZ}. And the idea  was  exploited and generalized to consider modules over infinite  dimensional Lie algebras, such as the Witt algebras of arbitrary rank \cite{TZ},  Heisenberg-Virasoro algebra and $W(2,2)$ algebra  \cite{CG}, the algebras $\mathcal{V}ir(a,b)$  \cite{HCS}, the Lie algebras related to the Virasoro algebra \cite{CC,CY} and so on. The aim of this paper is to classify such modules for the the affine-Virasoro algebra of type $A_1$.

This paper is organized as follows. In Section 2, we construct a class of non-weight modules over the affine-Virasoro algebra of type $A_1$, and study the simplicity and isomorphic relations of these modules. Section 3 is devoted to classifying all modules  whose restriction to the Cartan subalgebra (modulo center) are free of rank one over the affine-Virasoro algebra of type $A_1$.

Throughout the paper, we denote by $\C ,\,\Z,\,\C^*,\,\Z_+$ the sets of complex numbers, integers, nonzero complex numbers and nonnegative integers, respectively. All vector spaces  are assumed to be  over $\C$. For a Lie algebra $\mathfrak{g}$, we use $U(\mathfrak{g})$ to denote the universal enveloping algebra of $\mathfrak{g}$.

\section{Preliminaries}
In this section, we shall introduce some basic notations and establish some related results.
\begin{defi}\label{defi2.1}\rm Let $L$ be a finite-dimensional Lie algebra with a non-degenerated invariant symmetric bilinear form $(\cdot, \cdot)$. The affine-Virasoro algebra $L_{av}$ is the vector space
$$L_{av}=L\otimes\C[t,t^{-1}]\oplus\C C\oplus \bigoplus_{i\in\Z}\C d_i$$
with the Lie brackets:
\begin{equation*}
\aligned
&[x\otimes t^m,y\otimes t^n]=[x,y]\otimes t^{m+n}+m(x,y)\delta_{m+n,0}C,\\
&[d_i,d_j]=(j-i)d_{i+j}+\delta_{i+j,0}\frac{i^3-i}{12}C,\\
&[d_i,x\otimes t^m]=mx\otimes t^{m+i},\quad [C, L_{av}]=0,
\endaligned
\end{equation*}where $x,y\in L$, $m,n,i,j\in\Z$ (if $L$ has no such form, we set $(x,y)=0$ for all $x,y\in L$).
\end{defi}
We see that if $L=\C e$ is one-dimensional, then $L_{av}$ is just the twisted Heisenberg-Virasoro algebra (one center element). In the following  we only consider specially $L$ as the simple Lie algebra $\mathfrak{sl_2}={\rm span_{\C}}\{e,\,f,\,h\}$.

\begin{defi} \rm Take $L$ to be the Lie algebra $\mathfrak{sl}_2$ in Definition \ref{defi2.1}.  Then the resulting
Lie algebra $\mathfrak L$ (here the bilinear form $(\cdot,\cdot)$ is normalized by $(e, f)=1$)   with $\C$-basis $\{e_i,\, f_i,\,h_i,\, d_i,\,C\mid i\in\Z\}$  subject to the following Lie brackets:
\begin{equation*}\label{L-action}
\aligned
&[e_i,f_j]=h_{i+j}+i\delta_{i+j,0}C,\\
&[h_i,e_j]=2e_{i+j},\quad [h_i,f_j]=-2f_{i+j},\\
&[d_i,d_j]=(j-i)d_{i+j}+\delta_{i+j,0}\frac{i^3-i}{12}C,\\
&[d_i,h_j]=jh_{i+j},\quad [h_i,h_j]=-2i\delta_{i+j,0}C,\\
&[d_i,e_j]=je_{i+j},\quad [d_i,f_j]=jf_{i+j},\\
&[e_i,e_j]=[f_i,f_j]=[C,\mathfrak{L}]=0,
\endaligned
\end{equation*} is called the affine-Virasoro algebra of type $A_1$.

\end{defi}

\begin{defi}\label{defi2.2}\rm Let $\C[s,t]$ be the polynomial algebra in variables $s$ and $t$ with coefficients in $\C$. For $\lambda,\a\in\C^*, \b,\r\in\C, i\in\Z$ and $g(s,t)\in\C[s,t]$, define the action of $\mathfrak{L}$ on $\C[s,t]$ as follows:
\begin{align*}
\Omega(\lambda,\a, \b, \r):&\ \ \ \ e_i\cdot g(s,t)=\lambda^i\a g(s-i,t-2),\\
& \ \ \ \ f_i\cdot g(s,t)=-\frac{\lambda^i}\a(\frac{t}{2}-\b)(\frac{t}{2}+\b+1)g(s-i,t+2),\\
& \ \ \ \ h_i\cdot g(s,t)=\lambda^itg(s-i,t),\ \ \ \ d_i\cdot g(s,t)=\lambda^i(s+i\r)g(s-i,t),\\
& \ \ \ \ C\cdot g(s,t)=0;\\
\Delta(\lambda, \a, \b, \r):&\ \ \ \ e_i\cdot g(s,t)=-\frac{\lambda^i}{\a}(\frac{t}{2}+\b)(\frac{t}{2}-\b-1)g(s-i,t-2),\\
& \ \ \ \ f_i\cdot g(s,t)=\lambda^i\a g(s-i,t+2), \ \ \ \ h_i\cdot g(s,t)=\lambda^{i}tg(s-i,t),\\
& \ \ \ \ d_i\cdot g(s,t)=\lambda^i(s+i\r)g(s-i,t),\\
& \ \ \ \ C\cdot g(s,t)=0;\\
\Theta(\lambda, \a, \b, \r):&\ \ \ \ e_i\cdot g(s,t)=\lambda^i\a(\frac{t}{2}+\b)g(s-i,t-2),\\
& \ \ \ \ f_i\cdot g(s,t)=-\frac{\lambda^i}{\a}(\frac{t}{2}-\b)g(s-i,t+2),\\
& \ \ \ \ h_i\cdot g(s,t)={\lambda^i}tg(s-i,t),\ \ \ \ d_i\cdot g(s,t)=\lambda^i(s+i\r)g(s-i,t),\\
& \ \ \ \ C\cdot g(s,t)=0.
\end{align*}
\end{defi}

\begin{rema}\label{r-mk1}\rm  (1) Whenever  we consider the action of $\mathfrak L$ on $\C[s,t]$, we always mean one of these above.

(2) Denote by $\mathfrak{H}_{h}$ the vector space spanned by  the set $\{h_i,d_i,C\mid i\in\Z\}.$  An important fact needs to be pointed here is: though $\mathfrak H_h$ is an quotient algebra of the Heisenberg-Virasoro algebra $\mathcal Vir(0,0),$ they have the same submodule structure on $\C[s,t]$ (cf. \cite{CG} or \cite{HCS}).

\end{rema}

\begin{prop}
$\Omega(\lambda,\a, \b, \r),\Delta(\lambda, \a, \b, \r)$ and  $\Theta(\lambda, \a, \b, \r)$ are $\mathfrak{L}$-modules under the actions given in Definition \ref{defi2.2}. Moreover$,$ $\Omega(\lambda,\a, \b, \r)$ and $\Delta(\lambda, \a, \b, \r)$ are simple for all $\lambda,\a\in\C^*$ and $\b,\r\in\C;$ $\Theta(\lambda, \a, \b, \r)$ is simple if and only if $2\b \notin \Z_+$.
\end{prop}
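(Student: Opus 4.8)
The plan is to split the proof into two independent parts: first verifying that each of the three families genuinely defines a representation of $\mathfrak{L}$, and then carrying out the simplicity analysis. For the module axioms I would check, for every pair $X,Y$ among the generators $e_i,f_i,h_i,d_i$, that the given operators satisfy $X(Yg)-Y(Xg)=[X,Y]g$ with $[X,Y]$ read off the defining relations. Most checks are immediate, since each generator acts by the shift $s\mapsto s-i$ composed with multiplication by a function of $t$ (and a power of $\la$). The two relations carrying real content are $[d_i,d_j]$ and $[e_i,f_j]$: for the former one computes $(s+i\r)(s-i+j\r)-(s+j\r)(s-j+i\r)=(j-i)\bigl(s+(i+j)\r\bigr)$, reproducing $(j-i)d_{i+j}$ exactly. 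The key subtlety is that $C$ acts by $0$, so every central term on the right-hand side of a relation must be matched by an operator commutator that vanishes identically; for $[e_i,f_j]$ this forces the two quadratics in $t$ to differ by exactly $t$, and a short computation with $u=t/2$ confirms $(\tfrac t2-\b)(\tfrac t2+\b+1)-(\tfrac t2-1-\b)(\tfrac t2+\b)=t$, so that $[e_i,f_j]g=h_{i+j}g$ and the spurious $i\de_{i+j,0}C$ term correctly contributes nothing.

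For simplicity, note first that $C$ acts as $0$ and that $d_0,h_0$ act as multiplication by $s$ and by $t$; hence any nonzero submodule $M\subseteq\C[s,t]$ is closed under multiplication by $s$ and $t$, i.e.\ $M$ is a nonzero ideal of $\C[s,t]$, and it suffices to prove $1\in M$. \emph{Step A} reduces the $s$-degree: for $i\ne 0$ the operator $\la^{-i}h_i-h_0$ sends $g$ to $t\bigl(g(s-i,t)-g(s,t)\bigr)$, which has $s$-degree exactly one less than $g$ whenever $\deg_s g\ge1$ (its leading coefficient being $-ni\,t\,c_n(t)\ne0$, where $c_n(t)$ is the top $s$-coefficient of $g$). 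Iterating produces a nonzero $\phi(t)\in M\cap\C[t]$. This step is identical for all three families, since the $h_i$-action is the same.

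\emph{Step B} analyzes $N:=M\cap\C[t]$, which is an ideal $N=(q(t))$ of $\C[t]$ stable under the $t$-operators coming from $e_0,f_0$. For $\Omega$ these are $\phi(t)\mapsto\phi(t-2)$ together with multiplication by $t$, so $\phi(t)-\phi(t-2)\in N$ has strictly smaller degree; iterating yields a nonzero constant, whence $M=\C[s,t]$. The case $\Delta$ is symmetric via $f_0$. For $\Theta$ the induced maps are $q(t)\mapsto(\tfrac t2+\b)q(t-2)$ and $q(t)\mapsto(\tfrac t2-\b)q(t+2)$, so stability of $(q)$ forces $q(t)\mid(\tfrac t2+\b)q(t-2)$ and $q(t)\mid(\tfrac t2-\b)q(t+2)$. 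Writing the roots of $q$ in the variable $u=t/2$, the first divisibility lets one descend each root by $1$ until reaching $-\b$, while the second lets one ascend until reaching $\b$; hence every root has the form $\b-k=-\b+l$ with $k,l\in\Z_+$, which is possible for a \emph{non}constant $q$ precisely when $2\b\in\Z_+$. When $2\b=n\in\Z_+$ the polynomial $q(t)=\prod_{j=0}^{n}(\tfrac t2-\b+j)$ satisfies both divisibilities, so $q(t)\C[s,t]$ is a proper nonzero submodule; when $2\b\notin\Z_+$ only constants survive, giving simplicity.

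The main obstacle I anticipate is precisely the divisibility/root-counting argument of Step B for $\Theta$: one must show cleanly that the two one-sided divisibility conditions together pin the roots down to the arithmetic progression between $-\b$ and $\b$, and in particular that no nonconstant $q$ can satisfy both conditions unless $2\b$ is a nonnegative integer, together with exhibiting the explicit invariant ideal in the exceptional case. Everything else—the bracket verifications in Part 1 and the $s$-degree reduction in Step A—is mechanical by comparison.
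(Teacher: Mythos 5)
Your proposal is correct, and the simplicity argument takes a genuinely different route from the paper's. The paper does not prove the submodule reduction from scratch: it invokes the known classification of $\mathfrak{H}_{h}$-submodules of $\C[s,t]$ (citing \cite[Theorem 2.3]{H} via Remark 2.4(2)), so every nonzero submodule $W$ is either $\C[s,t]g(t)$ or $\C[s,t]sg(t)+\C[s,t]tg(t)$; it then applies iterated $e_0^k$, $e_1^k$ (resp.\ $f_0^k$) to produce two \emph{coprime} elements of $W$, forcing $1\in W$, and for $\Theta$ it observes that coprimality of $e_0^k\cdot g$ and $f_0^k\cdot g$ fails exactly when $2\b\in\Z_+$. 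Your Step A (killing the $s$-degree with $\la^{-i}h_i-h_0$) and Step B (the principal ideal $N=M\cap\C[t]=(q)$ together with the divisibility conditions $q(t)\mid(\tfrac t2+\b)q(t-2)$ and $q(t)\mid(\tfrac t2-\b)q(t+2)$, pinned down by chasing roots along the two arithmetic progressions toward $-\b$ and $\b$) replace both ingredients with a self-contained elementary argument; the root-chasing correctly yields $2\b=k+l\in\Z_+$ as the only obstruction, and your explicit invariant ideal $q(t)\C[s,t]$ with $q(t)=\prod_{j=0}^{2\b}(\tfrac t2-\b+j)$ is exactly the paper's proper submodule $V$ (same root set, indexed differently). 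What your route buys is independence from the external reference \cite{H}; what it omits, harmlessly for the statement at hand, is the paper's extra identification $V\cong\Theta(\la,\a,-\b-1,\r)$, which shows the unique proper submodule is itself simple. The bracket verifications you single out ($[d_i,d_j]$ and $[e_i,f_j]$, with the observation that all central terms must be matched by vanishing operator commutators since $C$ acts by zero) are the right ones and your computations check out.
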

\begin{proof}
For the first statement, we only tackle  the case  $\Omega(\lambda,\a, \b, \r)$, since the other two cases can be treated similarly.  In view of the  $\mathfrak{H}_{h}$-action, we know the following relations
\begin{equation*}
\aligned
&d_i\circ d_j-d_j\circ d_i=[d_i,d_j],\\
&d_i \circ h_j-h_j\circ d_i=[d_i,h_j],\\
&h_i\circ h_j-h_j\circ h_i=[h_i,h_j]
\endaligned
\end{equation*} hold on $\Omega(\lambda,\a, \b, \r)$ by \cite[Proposition 2.2]{HCS}.
Note according to the above definition for any $g(s,t)\in \C[s,t]$ that
\begin{eqnarray*}
&&e_i\cdot f_j\cdot g(s,t)-f_j\cdot e_i\cdot g(s,t)\nonumber\\
&=&e_i\cdot\big(-\frac{\lambda^j}{\a}(\frac{t}{2}-\b)(\frac{t}{2}+\b+1)g(s-j,t+2)\big)\nonumber\\
&&-f_j\cdot\big(\lambda^i\a g(s-i,t-2)\big)\nonumber\\
&=&-\lambda^{i+j}(\frac{t}{2}-\b-1)(\frac{t}{2}+\b)g(s-i-j,t)\nonumber\\
&&+\lambda^{i+j}(\frac{t}{2}-\b)(\frac{t}{2}+\b+1)g(s-i-j,t)\nonumber\\
&=&\lambda^{i+j}tg(s-i-j,t)\nonumber\\
&=&\big(h_{i+j}+i\delta_{i+j,0}C\big)\cdot g(s,t)=[e_i,f_j]\cdot g(s,t),
\end{eqnarray*}
\begin{eqnarray*}
&&h_i\cdot e_j\cdot g(s,t)-e_j\cdot h_i\cdot g(s,t)\nonumber\\
&=&h_i\cdot\big(\lambda^{j}\a g(s-j,t-2)\big)-e_j\cdot\big(\lambda^{i}t g(s-i,t)\big)\nonumber\\
&=&\lambda^{i+j}\a tg(s-i-j,t-2)-\lambda^{i+j}\a (t-2)g(s-i-j,t-2)\nonumber\\
&=&2\lambda^{i+j}\a g(s-i-j,t-2)\nonumber\\
&=&2e_{i+j}\cdot g(s,t)=[h_i,e_j]\cdot g(s,t),
\end{eqnarray*}
\begin{eqnarray*}
&&h_i\cdot f_j\cdot g(s,t)-f_j\cdot h_i\cdot g(s,t)\nonumber\\
&=&h_i\cdot\big(-\frac{\lambda^{j}}{\a}(\frac{t}{2}-\b)(\frac{t}{2}+\b+1) g(s-j,t+2)\big)\nonumber\\
&&-f_j\cdot\big(\lambda^{i}tg(s-i,t)\big)\nonumber\\
&=&-\frac{\lambda^{i+j}}{\a}t(\frac{t}{2}-\b)(\frac{t}{2}+\b+1) g(s-i-j,t+2)\nonumber\\
&&+\frac{\lambda^{i+j}}{\a}(\frac{t}{2}-\b)(\frac{t}{2}+\b+1)(t+2)g(s-i-j,t+2)\nonumber\\
&=&\frac{2\lambda^{i+j}}{\a}(\frac{t}{2}-\b)(\frac{t}{2}+\b+1)g(s-i-j,t+2)\nonumber\\
&=&-2f_{i+j}\cdot g(s,t)=[h_i,f_j]\cdot g(s,t),
\end{eqnarray*}
\begin{eqnarray*}
&&d_i\cdot e_j\cdot g(s,t)-e_j\cdot d_i\cdot g(s,t)\nonumber\\
&=&d_i\cdot\big(\lambda^j\a g(s-j,t-2)\big)-e_j\cdot\big(\lambda^i(s+i\r)g(s-i,t)\big)\nonumber\\
&=&\lambda^{i+j}\a(s+i\r)g(s-i-j,t-2)-\lambda^{i+j}\a(s+i\r-j)g(s-i-j,t-2)\nonumber\\
&=&j\lambda^{i+j}\a g(s-i-j,t-2)\nonumber\\
&=&je_{i+j}\cdot g(s,t)=[d_i,e_j]\cdot g(s,t)
\end{eqnarray*}
and
\begin{eqnarray*}
&&d_i\cdot f_j\cdot g(s,t)-f_j\cdot d_i\cdot g(s,t)\nonumber\\
&=&d_i\cdot\big(-\frac{\lambda^j}{\a}(\frac{t}{2}-\b)(\frac{t}{2}+\b+1)  g(s-j,t+2)\big)\nonumber\\
&&-f_j\cdot\big(\lambda^i(s+i\r)g(s-i,t)\big)\nonumber\\
&=&-\frac{\lambda^{i+j}}{\a}(s+i\r)(\frac{t}{2}-\b)(\frac{t}{2}+\b+1)g(s-i-j,t+2)\nonumber\\
&&+\frac{\lambda^{i+j}}{\a}(s+i\r-j)(\frac{t}{2}-\b)(\frac{t}{2}+\b+1)g(s-i-j,t+2)\nonumber\\
&=&-\frac{j\lambda^{i+j}}{\a}(\frac{t}{2}-\b)(\frac{t}{2}+\b+1) g(s-i-j,t+2)\nonumber\\
&=&jf_{i+j}\cdot g(s,t)=[d_i,f_j]\cdot g(s,t).
\end{eqnarray*}
And the last three  relations
\begin{equation*}
\aligned
&e_i\circ e_j-e_j\circ e_i=[e_i,e_j],\\
 &f_i\circ f_j-f_j\circ f_i=[f_i,f_j],\\
 {\rm and}\quad & C\circ x-x\circ C=0, \forall x\in \mathcal L
\endaligned
\end{equation*}can be checked easily, proving the first statement.

 Note that  $d_0$ and  $h_0$ are in fact the left multiplication operators $s$ and $t$  on $\C[s,t].$ In particular, $1$ is a generator of the $\mathfrak L$-module $\C[s,t]$.  So    the simplicity of these modules is equivalent to determining whether every nonzero $\mathfrak L$-submodule $\C[s,t]$ contains $1$. Let $W$ be a nonzero submodule of $\Omega(\lambda,\a, \b, \r),$ $\Delta(\lambda, \a, \b, \r)$ or $\Theta(\lambda, \a, \b, \r)$. Then regarding as an $\mathfrak{H}_{h}$-submodule,  either $W=\C[s,t]g(t)$ or $W=\C[s,t]sg(t)+\C[s,t]tg(t)$ for some nonzero polynomial $g(t)\in\C[t]$ by \cite[Theorem 2.3]{H} and Remark \ref{r-mk1}(2).
\begin{clai}\rm  $1\in W$ if $W\subseteq\Omega(\lambda,\a, \b, \r)$\ (resp. $\Delta(\lambda, \a, \b, \r)$). \end{clai}
Consider $ W\subseteq\Omega(\lambda,\a, \b, \r)$.
If $W=\C[s,t]g(t)$, then from the definition of the module structure one can inductively show that
$$g(t-2k)=\a^{-k}e_0^k\cdot g(t)\in W.$$
Note that we can make $g(t)$ and $g(t-2k)$ coprime by choosing $k$ large enough.

If  $W=\C[s,t]sg(t)+\C[s,t]tg(t)$,  then using induction on $k$ one has
\begin{eqnarray*}
\label{eqn1}&(\lambda e_0)^k\cdot sg(t)=(\lambda\a)^{k}sg(t-2k), \\
\label{eqn2}&e_1^k\cdot sg(t)=(\lambda\a)^{k}(s-k)g(t-2k),
\end{eqnarray*}
 and  the subtraction of these two gives
 $$g(t-2k)=\frac{1}{k(\lambda\a)^{k}}\big((\lambda e_0)^{k}\cdot sg(t)-e_1^{k}\cdot sg(t)\big)\in W.$$
It is easy to see that $tg(t)$ and $g(t-2k)$ coprime by choosing $k$ large enough. That is, we have coprime elements in $W$ and therefore $1\in W$ in both cases. Using the similar argument as  for $\Omega(\lambda,\a, \b, \r)$ but respectively replacing $e_0, e_1$ with $f_0, f_1$ we see that $1\in W$ if $W\subseteq\Delta(\lambda, \a, \b, \r)$.

Now assume that $W\subseteq\Theta(\lambda, \a, \b, \r).$
\begin{clai} \rm $1\in W$ if   $2\beta\notin\Z_+$ and there exists a nonzero (simple) $\mathcal L$-submodule $V$ of  $\Theta(\lambda, \a, \b, \r)$ such that $1\notin V$ if $2\b\in\Z_+$. \end{clai}

If $W=\C[s,t]g(t)$, then using induction on $k$ one  immediately has
\begin{eqnarray*}
\label{eqn3}&e_0^k\cdot g(t)=\a^{k}\prod_{n=0}^{k-1}(\frac{t}{2}+\b-n)g(t-2k), \\
\label{eqn4}&f_0^k\cdot g(t)=(-\frac{1}{\a})^{k}\prod_{n=0}^{k-1}(\frac{t}{2}-\b+n)g(t+2k).
\end{eqnarray*}
And  for the other case  $W=\C[s,t]sg(t)+\C[s,t]tg(t)$,   one has
\begin{eqnarray*}
\label{eqn5}&e_0^k\cdot tg(t)=\a^{k}\prod_{n=0}^{k-1}(\frac{t}{2}+\b-n)(t-2k)g(t-2k),\\
\label{eqn6}&f_0^k\cdot tg(t)=(-\frac{1}{\a})^{k}\prod_{n=0}^{k-1}(\frac{t}{2}-\b+n)(t+2k)g(t+2k).
\end{eqnarray*}
Note that if $2\b\notin\Z_+$,   then $e_0^k\cdot g(t)$ and $f_0^k\cdot g(t)$ (resp. $e_0^k\cdot tg(t)$ and $f_0^k\cdot tg(t)$) for large enough $k$ are coprime elements in $W$.  Thus,  $1\in W$.

Assume that $2\b\in\Z_+$. Consider the vector space
$$V:=\C[s,t]\prod_{n=0}^{2\b}(\frac{t}{2}+\b-n).$$  Clearly,
\begin{equation}\label{ffd}
\mathfrak{H}_{h}\cdot V\subset V. \end{equation}
For any $0\neq g(s,t)\prod_{n=0}^{2\b}(\frac{t}{2}+\b-n)\in V$ and $i\in\Z$, we have
\begin{eqnarray}\label{asf}e_i \cdot g(s,t)\prod_{n=0}^{2\b}(\frac{t}{2}+\b-n)&=&\lambda^i\a(\frac{t}{2}+\b)g(s-i,t-2)\prod_{n=0}^{2\b}(\frac{t}{2}+\b-n-1)\nonumber\\ &=&\lambda^i\a(\frac{t}{2}-\b-1)g(s-i,t-2)\prod_{n=0}^{2\b}(\frac{t}{2}+\b-n)\end{eqnarray}
and
\begin{eqnarray}\label{asf2}f_i \cdot g(s,t)\prod_{n=0}^{2\b}(\frac{t}{2}+\b-n)&=&-\frac{\lambda^i}{\a}(\frac{t}{2}-\b)g(s-i,t+2)\prod_{n=0}^{2\b}(\frac{t}{2}+\b-n+1)\nonumber\\ &=&-\frac{\lambda^i}{\a}(\frac{t}{2}+\b+1)g(s-i,t+2)\prod_{n=0}^{2\b}(\frac{t}{2}+\b-n),\end{eqnarray}
These along with \eqref{ffd} show that $V$ is a proper submodule. Furthermore, \eqref{asf} and \eqref{asf2} entail us to establish an $\mathfrak{L}$-module isomorphism \begin{equation*}
\tau: \Theta(\lambda, \a, -\b-1, \r)\to V\ \ \ \ g(s,t)\mapsto g(s,t)\prod_{n=0}^{2\b}(\frac{t}{2}+\b-n).\end{equation*}
Then   $V\cong  \Theta(\lambda, \a, -\b-1, \r)$ is simple,  since $2(-\b-1)\notin\Z_+$. This completes the proof.
\end{proof}

The following proposition gives a characterization of  isomorphisms between the  $\mathfrak{L}$-modules constructed above.
\begin{prop}
Let $\lambda, \lambda^{\prime}, \a,\a^{\prime}\in\mathbb{C}^*,\b, \b^{\prime},\r, \r^{\prime}\in\mathbb{C}$. Then
\begin{eqnarray} \label{xxit11}&\Omega(\lambda,\a,\b,\r)\cong
\Omega(\lambda^{\prime},\a^{\prime},\b^{\prime},\r^{\prime})\Longleftrightarrow (\lambda,\a,\b,\r)=(\lambda^{\prime},\a^{\prime},\b^{\prime},\r^{\prime})\\
&\quad{\rm or}\quad (\lambda,\a,\b,\r)=(\lambda^{\prime},\a^{\prime},-\b^{\prime}-1,\r^{\prime});\nonumber\\
\label{xxit1}&\Delta(\lambda,\a,\b,\r)\cong
\Delta(\lambda^{\prime},\a^{\prime},\b^{\prime},\r^{\prime})\Longleftrightarrow (\lambda,\a,\b,\r)=(\lambda^{\prime},\a^{\prime},\b^{\prime},\r^{\prime})\\
&\quad{\rm or}\quad (\lambda,\a,\b,\r)=(\lambda^{\prime},\a^{\prime},-\b^{\prime}-1,\r^{\prime});\nonumber\\
\label{xxit2}&\Theta(\lambda,\a,\b,\r)\cong\Theta(\lambda^{\prime},\a^{\prime},\b^{\prime}\r^{\prime})\Longleftrightarrow (\lambda,\a,\b,\r)=(\lambda^{\prime},\a^{\prime},\b^{\prime},\r^{\prime}).\end{eqnarray}
\end{prop}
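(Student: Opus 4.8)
The plan is to reduce every isomorphism to a scalar by exploiting the Cartan generators $d_0$ and $h_0$. First I observe that on each of the three modules $d_0$ acts as multiplication by $s$ and $h_0$ as multiplication by $t$, since $d_0\cdot g(s,t)=sg(s,t)$ and $h_0\cdot g(s,t)=tg(s,t)$ by Definition \ref{defi2.2}. Consequently, if $\varphi$ is any $\mathfrak L$-module isomorphism between two modules of the same type, then $\varphi$ commutes with multiplication by $s$ and by $t$, hence with multiplication by every polynomial. Therefore $\varphi$ is $\C[s,t]$-linear for the left-multiplication structure, so $\varphi(g)=g\,\varphi(1)$ for all $g\in\C[s,t]$; bijectivity forces $\varphi(1)$ to be a unit of $\C[s,t]$, i.e.\ a nonzero constant $c$. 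Thus every isomorphism is simply $\varphi=c\cdot\mathrm{id}$.

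Next I would feed $\varphi=c\cdot\mathrm{id}$ into the intertwining identities $\varphi(x\cdot g)=x\cdot\varphi(g)$ for $x\in\{e_i,f_i,d_i\}$ and compare coefficients; the center and the $h_i$ contribute nothing new. For $\Omega$, the relation coming from $e_i$ reads $\lambda^i\a=(\lambda')^i\a'$ for all $i$, whence $\a=\a'$ (take $i=0$) and then $\lambda=\lambda'$; the relation from $d_i$ gives $\lambda^i(s+i\r)=(\lambda')^i(s+i\r')$, whence $\r=\r'$; and the relation from $f_i$ reduces, after cancelling the now-equal prefactors, to the polynomial identity $(\frac t2-\b)(\frac t2+\b+1)=(\frac t2-\b')(\frac t2+\b'+1)$, i.e.\ $\b^2+\b=(\b')^2+\b'$, equivalently $(\b-\b')(\b+\b'+1)=0$. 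This yields $\b=\b'$ or $\b=-\b'-1$, matching \eqref{xxit11}. The module $\Delta$ is handled identically with the roles of $e_i$ and $f_i$ exchanged. For $\Theta$ the decisive relation is the one from $e_i$, which becomes $\a(\frac t2+\b)=\a'(\frac t2+\b')$; comparing the two coefficients of this degree-one polynomial in $t$ forces $\a=\a'$ and then $\b=\b'$, so no extra symmetry survives and \eqref{xxit2} follows.

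For the converse implications I only need to observe that the indicated parameter changes leave the module structure literally unchanged. For $\Omega$ the sole $\b$-dependent action is that of $f_i$, whose coefficient $(\frac t2-\b)(\frac t2+\b+1)$ is invariant under $\b\mapsto-\b-1$; likewise for $\Delta$ the coefficient $(\frac t2+\b)(\frac t2-\b-1)$ of the $e_i$-action is invariant under $\b\mapsto-\b-1$. Hence in both cases the identity map is an isomorphism $\Omega(\lambda,\a,\b,\r)\to\Omega(\lambda,\a,-\b-1,\r)$ (resp.\ for $\Delta$), giving the $\Leftarrow$ directions of \eqref{xxit11} and \eqref{xxit1}; the equal-parameter case and the $\Theta$ case are trivial.

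I expect the only real step to be the initial reduction to scalars: once one notices that $d_0$ and $h_0$ act as the commuting multiplication operators $s$ and $t$, the freeness of rank one over $\C[s,t]$ collapses the isomorphism problem to elementary coefficient comparison. The conceptual point explaining the dichotomy is structural: for $\Omega$ and $\Delta$ the parameter $\b$ enters only through a single quadratic in $t$ that is symmetric in $\b$ and $-\b-1$, whereas for $\Theta$ it enters through the two distinct linear factors $\frac t2+\b$ and $\frac t2-\b$, so the hidden symmetry is broken and the classification becomes rigid.
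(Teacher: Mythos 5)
Your proof is correct and follows essentially the same route as the paper: both reduce the isomorphism to the image of the generator $1$ (using that $d_0$ and $h_0$ act as multiplication by $s$ and $t$) and then compare the polynomial coefficients appearing in the $e_i$-, $f_i$- and $d_i$-actions, with the dichotomy $\b=\b'$ or $\b=-\b'-1$ coming from the symmetric quadratic $\b(\b+1)$. The only difference is that you derive $(\lambda,\r)=(\lambda',\r')$ and the constancy of $\varphi(1)$ directly from $\C[s,t]$-linearity, whereas the paper cites \cite[Proposition 2.3(ii)]{HCS} for the $\mathfrak{H}_{h}$-part; your version is slightly more self-contained but the substance is identical.
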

\begin{proof} We only prove  \eqref{xxit11}, a similar argument can be applied to \eqref{xxit1} and \eqref{xxit2}. For this, it suffices to show the $``\Longrightarrow"$ part.
Let $\varphi:\Omega(\lambda,\a,\b,\r)\to\Omega(\lambda^{\prime},\a^{\prime},\b^{\prime},\r^{\prime})$ be an  isomorphism of $\mathfrak{L}$-modules. Viewing $\Omega(\lambda,\a,\b,\r)$ and $\Omega(\lambda^{\prime},\a^{\prime},\b^{\prime},\r^{\prime})$ as $\mathfrak{H}_{h}$-modules, we get $(\lambda,\r)=(\lambda^{\prime},\r^{\prime})$ by \cite[Proposition 2.3(ii)]{HCS}. Now for any  $g(t)\in\C[t]$,  we have \begin{eqnarray*}\label{xx}&\varphi(g(t))=\varphi(g(h_{0})\cdot1)=g(h_0)\cdot\varphi(1)=g(t)\varphi(1),\\
&\varphi(\a)=\varphi(e_0\cdot1)=e_0\cdot \varphi(1)=\a^{\prime}\varphi(1)\\
&{\rm and}\quad \varphi(-\frac{1}{\a}(\frac{t}{2}-\b)(\frac{t}{2}+\b+1))
=\varphi(f_0\cdot1)=f_0\cdot\varphi(1)=-\frac{1}{\a^{\prime}}(\frac{t}{2}-\b^{\prime})(\frac{t}{2}+\b^{\prime}+1)\varphi(1).\end{eqnarray*}
It is easy to see from the first two formulae above that  $\a=\a^{\prime}$, which together with the first and third formulae gives rise to $\b=\b^{\prime}$ or $\b=-\b^{\prime}-1$. This completes the proof.
\end{proof}

\section{Main result}
It is clear that  the Cartan subalgebra  (modulo center) of $\mathfrak{L}$ is spanned by $h_0$ and $d_0$. The main result of the present paper is to classify all modules over $\mathfrak{L}$ whose restrictions to $U(\C d_0 \oplus\C h_0)$ are
free of rank $1$. Before presenting the main result, we first give a lemma, which  can be easily shown  by induction on $m$.
\begin{lemm}\label{lemma1}
For any $i\in\Z$ and $0\le m\in\Z,$ we have
\begin{eqnarray*}
&e_i d_0^m=(d_0-i)^me_i,\ \ \ \ f_i d_0^m=(d_0-i)^mf_i,\\
&e_i h_0^m=(h_0-2)^me_i,\ \ \ \ f_i h_0^m=(h_0+2)^mf_i.
\end{eqnarray*}
\end{lemm}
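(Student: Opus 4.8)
The plan is to prove the four commutation identities by a direct induction on the exponent $m$, treating each identity in turn; since all four have the same structure, I would carry out the argument in detail for one of them and remark that the others follow identically. The key observation that drives everything is that the generators $e_i$ and $f_i$ act as shift operators on the weights associated to $d_0$ and $h_0$: from the bracket relations $[d_i,e_j]=je_{i+j}$ and $[d_i,f_j]=jf_{i+j}$, setting $i=0$ gives $[d_0,e_j]=0$ is false---rather one must use $[d_i,e_j]=je_{i+j}$ with the roles reversed, so the relevant base relations are $d_0e_i-e_id_0=[d_0,e_i]=ie_i$, hence $e_id_0=(d_0-i)e_i$, and similarly $h_0e_i-e_ih_0=[h_0,e_i]=2e_i$ gives $e_ih_0=(h_0-2)e_i$. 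These are exactly the $m=1$ cases, and the signs in the $f_i$ relations come from $[d_0,f_i]=if_i$ and $[h_0,f_i]=-2f_i$.

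First I would record the four base cases $m=1$ as consequences of the defining Lie brackets, namely $e_id_0=(d_0-i)e_i$, $f_id_0=(d_0-i)f_i$, $e_ih_0=(h_0-2)e_i$, and $f_ih_0=(h_0+2)f_i$; the case $m=0$ is trivial. Then for the inductive step on, say, the first identity, I would assume $e_id_0^m=(d_0-i)^me_i$ and compute
\begin{equation*}
e_id_0^{m+1}=(e_id_0^m)d_0=(d_0-i)^me_id_0=(d_0-i)^m(d_0-i)e_i=(d_0-i)^{m+1}e_i,
\end{equation*}
using the induction hypothesis in the second equality and the base case in the third. The only subtlety is that $d_0-i$ commutes with the scalar-shifted polynomial $(d_0-i)^m$, which is immediate since both are polynomials in the single operator $d_0$. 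The same three-line computation proves the other three identities verbatim, with $f_i$ in place of $e_i$, or with $h_0$ and the appropriate sign in place of $d_0$.

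I do not anticipate any real obstacle here: the lemma is a purely formal consequence of the linear bracket relations between the Cartan-type elements $d_0,h_0$ and the generators $e_i,f_i$, and the induction is the standard ``move one factor past the shift operator at a time'' argument. The mildest point requiring care is bookkeeping of the signs---ensuring that $f_i$ raises the $h_0$-eigenvalue by $2$ while $e_i$ lowers it by $2$, and that both $e_i$ and $f_i$ lower the $d_0$-eigenvalue by $i$---but this is dictated entirely by the defining brackets and involves no computation beyond reading off $[h_0,f_i]=-2f_i$ and $[d_0,f_i]=if_i$. Hence the proof is a short, clean induction and I would present it as such, doing one case fully and invoking symmetry for the rest.
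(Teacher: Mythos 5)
Your proposal is correct and is exactly the argument the paper intends: the paper gives no written proof, remarking only that the lemma ``can be easily shown by induction on $m$,'' and your induction with base cases $e_id_0=(d_0-i)e_i$, $f_id_0=(d_0-i)f_i$, $e_ih_0=(h_0-2)e_i$, $f_ih_0=(h_0+2)f_i$ read off from the brackets $[d_0,e_i]=ie_i$, $[d_0,f_i]=if_i$, $[h_0,e_i]=2e_i$, $[h_0,f_i]=-2f_i$ is precisely that argument. The signs all check out, so there is nothing to add.
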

\begin{theo}\label{theo1}Any $U(\mathfrak{L})$-module $M$  such that its restriction to $U(\C d_0 \oplus\C h_0)$ is free of rank $1$ is isomorphic to one of the modules $$\Omega(\lambda,\a, \b, \r),\,\Delta(\lambda, \a, \b, \r),\,\Theta(\lambda, \a, \b, \r),$$ for some $\lambda,\a\in\C^*$ and $\b,\r\in\C$.
\end{theo}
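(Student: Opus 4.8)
The plan is to start from the defining property that $M$ is free of rank one over $U(\C d_0\oplus\C h_0)=\C[d_0,h_0]$, so that as a vector space $M\cong\C[s,t]$ with $s,t$ denoting the left-multiplication operators representing $d_0,h_0$; fix a generator $v$ and identify each $m\in M$ with the unique polynomial $g(s,t)\in\C[s,t]$ such that $m=g(d_0,h_0)v$. The whole task is then to compute how the remaining generators $e_i,f_i,h_i,d_i$ act on a general element $g(d_0,h_0)v$, and to show the answer is forced into one of the three normal forms of Definition~\ref{defi2.2}. First I would pin down the scalars $e_i\cdot v$, $f_i\cdot v$, $h_i\cdot v$, $d_i\cdot v$: since $M$ is free these are again polynomials in $s,t$, so I would write, e.g., $d_i\cdot v=D_i(s,t)$ and use the commutation relations to constrain $D_i$. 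Lemma~\ref{lemma1} is the crucial computational tool: it lets me commute $d_0^m,h_0^m$ past $e_i,f_i$ (producing the shifts $s\mapsto s-i$, $t\mapsto t-2$ etc.), which is exactly the mechanism that converts the abstract action into the shift operators appearing in the three families.

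The key steps, in order, are as follows. First, using $[d_0,d_i]=id_i$ and $[h_0,d_i]=0$ together with Lemma~\ref{lemma1}, I would show $d_i\cdot v=\lambda^i(s+i\r)$ for suitable scalars $\lambda,\r$ (the exponential dependence on $i$ comes from $[d_1,d_i]=(i-1)d_{i+1}$-type relations forcing $D_{i+1}$ to be a scalar multiple of a shift of $D_i$, and the linear-in-$i$ factor from the bracket with $d_0$). Likewise $h_i\cdot v=\lambda^i t$, with the same $\lambda$ forced by $[d_0,h_i]=ih_i$ and $[d_1,h_i]$. Second, I would analyze $e_i\cdot v$ and $f_i\cdot v$. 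The relations $[h_0,e_i]=2e_i$ and $[h_0,f_i]=-2f_i$, via Lemma~\ref{lemma1}, force $e_i\cdot v$ to carry the shift $t\mapsto t-2$ and $f_i\cdot v$ the shift $t\mapsto t+2$; the relations with $d_0,d_i$ fix the $\lambda^i$ and the $s$-shift. Writing $e_0\cdot v=P(t)$ and $f_0\cdot v=Q(t)$ (after stripping the forced shifts they depend on $t$ only, since the $s$-dependence is pinned down by the $d$-relations), the heart of the matter is the single relation $[e_0,f_0]=h_0$, which yields a functional equation relating $P$ and $Q$. Third, I would solve that functional equation. The condition $e_0f_0-f_0e_0=h_0$ translates, after applying the shifts, into an identity of the form $P(t)Q(t-2)-Q(t)P(t+2)=t$ (up to the normalization constants), and classifying its polynomial solutions $P,Q$ is what produces exactly the three cases: $P$ constant (giving $\Omega$), $Q$ constant (giving $\Delta$), or $P,Q$ both linear in $t$ (giving $\Theta$ with the factorization into $(\tfrac{t}{2}\pm\b)$ terms). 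A normalization/rescaling of the generator $v$ absorbs the remaining freedom and fixes $\a$.

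The main obstacle I expect is the third step: showing that the functional equation for $P$ and $Q$ admits no solutions beyond the three listed families. Degree bookkeeping handles the leading behavior---comparing degrees on both sides of $P(t)Q(t-2)-Q(t)P(t+2)=t$ restricts $\deg P+\deg Q$---but one must rule out spurious higher-degree cancellations and, more delicately, verify that the action so determined is actually consistent with \emph{all} the remaining brackets (for instance $[e_i,e_j]=0$ and $[e_i,f_j]=h_{i+j}+i\delta_{i+j,0}C$ for general $i,j$, not just $i=j=0$), since a priori the reduction to $P,Q$ of a single variable uses only a subset of the relations. A secondary technical point is arguing that $\lambda\in\C^*$ and $\a\in\C^*$ (nonvanishing), which follows because $M$ is a genuine module on which, say, $e_i$ acts injectively in the relevant sense; I would extract this from the requirement that the $\lambda^i$-factors be consistent across all $i\in\Z$ (in particular $\lambda^i\lambda^{-i}=1$), and from the fact that a zero leading scalar would force $M$ not to be free. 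Once the three families are isolated, the identification with $\Omega,\Delta,\Theta$ is immediate from Definition~\ref{defi2.2}, and the classification is complete.
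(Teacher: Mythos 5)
Your outline tracks the paper's own proof closely: identify $M$ with $\C[s,t]$, reduce the $e_i,f_i$ actions to the generator images $E_0=e_0\cdot v$ and $F_0=f_0\cdot v$ via Lemma~\ref{lemma1}, extract the functional equation $E_0(t)F_0(t-2)-E_0(t+2)F_0(t)=t$ from $[e_0,f_0]=h_0$, and split into the three cases $\deg E_0+\deg F_0=2$ with $(0,2),(2,0),(1,1)$ giving $\Omega,\Delta,\Theta$. The gap is in the sentence where you assert that $E_0$ and $F_0$ ``depend on $t$ only, since the $s$-dependence is pinned down by the $d$-relations.'' A priori $E_0$ and $F_0$ are polynomials in \emph{both} $s$ and $t$, and this is exactly the delicate point. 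Writing $E_0=\sum_i a_i(s)t^i$, $F_0=\sum_j b_j(s)t^j$, the equation from $[e_0,f_0]=h_0$ forces only $a_m(s)b_n(s)=-\tfrac14$ (so the top coefficients are constants) and, in the case $(m,n)=(0,2)$, the coefficient of $t^1$ in $F_0$; the coefficient of $t^0$ remains an arbitrary polynomial $v_1(s)$. The relations $[d_i,e_0]=0$ do not dispose of it in any evident way, because at that stage the $\mathfrak{H}_{h}$-module classification only gives $d_i\cdot v=\lambda^i\bigl(s+p_i(t)\bigr)$ with $p_i$ a genuine polynomial in $t$, so the identity $\bigl(d_0+p_i(h_0)\bigr)E_0(d_0-i,h_0)=\bigl(d_0+p_i(h_0-2)\bigr)E_0(d_0,h_0)$ does not immediately rule out $d_0$-dependence. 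What the paper actually does is compute $F_1$ from $[h_1,f_0]=-2f_1$ and substitute into $[f_0,f_1]\cdot v=0$, which yields $v_1(d_0)^2=v_1(d_0)v_1(d_0-1)$ and hence $v_1\in\C$ (symmetrically, $[e_0,e_1]\cdot v=0$ in the other cases). In your write-up these relations appear only as an after-the-fact ``consistency check,'' but they are load-bearing: without them your one-variable functional equation for $P(t),Q(t)$ is not yet available.

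A second, related mis-ordering: you claim $d_i\cdot v=\lambda^i(s+i\rho)$ from the $d$--$h$ subalgebra alone. That reduction of $p_i(t)$ to the constant $i\rho$ requires the bracket $[d_i,e_0]=0$ \emph{after} $E_0$ has been shown to be a nonzero polynomial in $t$ alone (then $p_i(h_0)=p_i(h_0-2)$ forces $p_i$ constant), so it belongs at the end of each case, not at the start. Finally, $\a$ cannot be normalized away by rescaling the generator --- it is a genuine invariant of the isomorphism class --- though this does not affect the statement being proved. None of this breaks the strategy; the missing relations are available and do close the argument, but as written the proposal skips, and misattributes, the two steps where the actual work lies.
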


\begin{proof} Let $M$ be an $\mathfrak{L}$-module which is a free  $U(\C d_0 \oplus\C h_0)$-module of
 rank $1$. Then $M=U(\C d_0 \oplus\C h_0)$. It follows from viewing as an $\mathfrak{H}_{h}$-module that $M\cong\Phi(\lambda,0,\mathbf {p})$ by \cite[Theorem 3.1]{HCS}:
\begin{equation*}
\aligned
&d_i\cdot g(d_0,h_0)=\lambda^i\big(d_0+p_i (h_0)\big)g(d_0-i,h_0),\\
&h_i\cdot g(d_0,h_0)=\lambda^ih_0g(d_0-i,h_0),\ \ \ \ C\cdot g(d_0,h_0)=0,
\endaligned
\end{equation*}
where $g(d_0,h_0)\in U(\C d_0 \oplus\C h_0),\,\lambda\in\C^*,\, i\in\Z$ and $$\mathbf{p}=\big(p_i(h_0)\big)_{i\in\Z}\in\left\{\big(p_i(h_0)\big)_{i\in\Z}\mid p_i(h_0)=\sum_{l=0}^{\infty}p^{(l)}ih_{0}^{l}\in\C[h_0], p^{(l)}\in\C\right\}.$$

For any $i\in\Z$, assume that $E_i(d_0,h_0)=e_i\cdot1$ and  $F_i(d_0,h_0)=f_i\cdot1$ for some $E_i(d_0,h_0)$, $F_i(d_0,h_0)\in U(\C d_0\oplus \C h_0)$.  Take any
$$g(d_0, h_0)=\sum_{j,k\in\Z_+}g_{j,k}d_{0}^j h_{0}^k\in U(\C d_0\oplus \C h_0),\quad {\rm where}\ g_{j,k}\in\C.$$ Then by  Lemma \ref{lemma1},
\begin{eqnarray}\label{ww8}
e_i\cdot g(d_0, h_0)&=&e_i\cdot \sum_{j,k\in\Z_+}g_{j,k}d_{0}^j h_{0}^k=
\sum_{j,k\in\Z_+}g_{j,k}(d_{0}-i)^{j} e_i\cdot h_{0}^{k}\nonumber\\
&=&
\sum_{j,k\in\Z_+}g_{j,k}(d_{0}-i)^{j} (h_{0}-2)^{k}E_i(d_0,h_0)
\end{eqnarray}
and
\begin{eqnarray}\label{ww9}
f_i\cdot g(d_0, h_0)&=&f_i\cdot \sum_{j,k\in\Z_+}g_{j,k}d_{0}^j h_{0}^k=
\sum_{j,k\in\Z_+}g_{j,k}(d_{0}-i)^{j} f_i\cdot h_{0}^{k}\nonumber\\
&=&
\sum_{j,k\in\Z_+}g_{j,k}(d_{0}-i)^{j} (h_{0}+2)^{k}F_i(d_0,h_0).
\end{eqnarray} Thus, the actions of $e_i$ and $f_i$ on $M$ are completely determined by $E_i(d_0,h_0)$ and  $F_i(d_0,h_0)$, respectively. For this reason, in what follows we only need to determine $E_i(d_0,h_0)$ and $F_i(d_0,h_0)$ for all $i\in\Z$.

Using \eqref{ww8} and \eqref{ww9}, we present some formulae here, which will be used to do calculations in the following.
The equations $$[e_0,f_0]\cdot1=h_0\cdot1,[e_0,e_1]\cdot1=0,[f_0,f_1]\cdot1=0$$
are respectively equivalent to
\begin{eqnarray}
\label{equ1}&E_0(d_0,h_0)F_0(d_0,h_0-2)-E_0(d_0,h_0+2)F_0(d_0,h_0)=h_0, \\
\label{equ2}&E_0(d_0,h_0)E_1(d_0,h_0-2)=E_0(d_0-1,h_0-2)E_1(d_0,h_0),\\
\label{equ3}&F_0(d_0,h_0)F_1(d_0,h_0+2)=F_0(d_0-1,h_0+2)F_1(d_0,h_0).
\end{eqnarray}
Note that $E_0(d_0,h_0)F_0(d_0,h_0)\neq0$. Since otherwise $E_0(d_0,h_0)F_0(d_0,h_0)=0$ and \eqref{equ1} would give $h_0=0$, which is absurd. So we may assume
\begin{eqnarray*}
E_0(d_0,h_0)=\sum_{i=0}^{m}a_{i}(d_0)h_0^i\quad {\rm and }\quad F_0(d_0,h_0)=\sum_{i=0}^{n}b_{i}(d_0)h_0^i
\end{eqnarray*}
for some $a_{i}(d_0),b_{i}(d_0)\in\C[d_0]$ and $a_{m}(d_0)b_{n}(d_0)\neq 0$. Inserting  these expressions into \eqref{equ1} yields
\begin{eqnarray*}\label{eq1}
\sum_{i=0}^{m}a_{i}(d_0)h_0^i\sum_{i=0}^{n}b_{i}(d_0)(h_0-2)^i
-\sum_{i=0}^{m}a_{i}(d_0)(h_0+2)^i\sum_{i=0}^{n}b_{i}(d_0)h_0^i=h_0,
\end{eqnarray*}
comparing  highest degree terms, with respect to $h_0$,  of both sides of  which   gives
\begin{eqnarray*}\label{ass}
m+n=2\quad {\rm and}\quad a_m(d_0)b_n(d_0)=-\frac{1}{4}.
\end{eqnarray*}From now on the discussion are divided into the following  three cases.
\begin{case}\label{c-a-e-1}$(m,n)=(0,2)$. \end{case}

In this case, we have
\begin{eqnarray*}\label{e0}E_0(d_0,h_0)=\a\quad {\rm and}\quad F_0(d_0,h_0)=-\frac{1}{4\a}h_0^2+u_1(d_0)h_0+v_1(d_0)
\end{eqnarray*}
for some $\a\in\C^{*}$ and $u_1(d_0),v_1(d_0)\in\C[d_0]$. Inserting  the two expressions  into \eqref{equ1} one has $u_1(d_0)=-\frac{1}{2\a}$. Namely,
\begin{eqnarray}\label{ee}F_0(d_0,h_0)=-\frac{1}{4\a}h_0^2-\frac{1}{2\a}h_0+v_1(d_0).\end{eqnarray}
It follows from this and  $[h_1,f_0]\cdot1=-2f_1\cdot1$ that
\begin{eqnarray}\label{ass1}
F_1(d_0,h_0)=\lambda\big(-\frac{1}{4\a}h_0^2-\frac{1}{2\a}h_0+\frac{1}{2}h_0v_1(d_0)-\frac{1}{2}h_0v_1(d_0-1)+v_1(d_0)\big).
\end{eqnarray}
Inserting  \eqref{ee}, \eqref{ass1} into  \eqref{equ3} and equating the terms  do not depend on $h_0$ of both sides, we obtain

$$v_1(d_0)^2=v_1(d_0)v_1(d_0-1),$$ which forces $v_1(d_0)\in\C$. So
\begin{equation*}F_0(d_0,h_0)=-\frac{1}{\a}(\frac{h_0}{2}-\b)(\frac{h_0}{2}+\b+1)\quad {\rm for\ some}\ \b\in\C.
\end{equation*}
Then $$E_i(d_0,h_0)=e_i\cdot1=\frac12[h_i,e_0]\cdot1=\frac12(\alpha h_i\cdot1-\lambda^i e_0\cdot h_0\cdot1)=\frac{1}2\big(\alpha\lambda^ih_0-\alpha\lambda^i(h_0-2)\big)=\alpha\lambda^i,$$
 \begin{eqnarray*}
F_i(d_0,h_0)&=&f_i\cdot1=\frac12[f_0,h_i]\cdot1=\frac12(\lambda^if_0\cdot h_0\cdot1-h_i\cdot F_0(d_0,h_0))\\
&=&\frac{\lambda^i}2\big((h_0+2)F_0(d_0,h_0)-h_0F_0(d_0-i,h_0)\big)\\
&=&-\frac{\lambda^i}{\a}(\frac{h_0}{2}-\b)(\frac{h_0}{2}+\b+1),\ \ \ \ \forall i\in\Z.
\end{eqnarray*}
And  from   $[d_i,e_0]\cdot1=0$  we can deduce that  $p_i(h_0)\in\C$, i.e., $p_i(h_0)=i\gamma$ for some $\r\in\C$ and all $i\in\Z$. Thus, $M\cong\Omega(\lambda,\a, \b, \r)$.
\begin{case} $(m,n)=(2,0)$. \end{case}
Interchanging $E_0(d_0,h_0)$ and $F_0(d_0, h_0)$  and then imitating the proof of Case \ref{c-a-e-1},  we will see that  $M\cong\Delta(\lambda,\a, \b, \r).$

\begin{case} $(m,n)=(1,1)$. \end{case}
Now we can assume that
\begin{eqnarray}\label{assi}E_0(d_0,h_0)=\frac{\a}{2}h_0+u_2(d_0)\end{eqnarray}
and \begin{eqnarray*}\label{assu}F_0(d_0,h_0)=-\frac{1}{2\a} h_0+v_2(d_0)\end{eqnarray*}
for some $\a\in\C^{*}$ and $u_2(d_0),v_2(d_0)\in\C[d_0]$. Inserting the two expressions into \eqref{equ1} forces $$u_2(d_0)=\a^2v_2(d_0).$$ It follows from  \eqref{assi} and $[h_1,e_0]\cdot1=2e_1\cdot1$ that
\begin{eqnarray}\label{assg}
E_1(d_0,h_0)=\lambda\big(\frac{\a}{2}h_0+\frac{1}{2}h_0u_2(d_0-1)-\frac{1}{2}h_0u_2(d_0)+u_2(d_0)\big).
\end{eqnarray}
Inserting   \eqref{assi}, \eqref{assg}  into  \eqref{equ2} and then equating the terms  do not depend on $h_0$ of both sides, we have $$u_2(d_0)^2=u_2(d_0)u_2(d_0-1),$$ which implies $u_2(d_0)\in\C$. Thus,
\begin{equation*}E_0(d_0,h_0)=\a(\frac{h_0}{2}+\b) \quad {\rm and}\quad F_0(d_0,h_0)=-\frac{1}{\a}(\frac{h_0}{2}-\b)\quad {\rm for\ some}\ \b\in\C.
\end{equation*}
These along with $$[h_i,e_0]\cdot1=2e_i\cdot1\quad {\rm and}\quad [h_i,f_0]\cdot1=-2f_i\cdot1,\ \ \ \ \forall i\in\Z$$ give
$$E_{i}(d_0,h_0)=\lambda^i\a(\frac{h_0}{2}+\b)\quad {\rm and}\quad F_{i}(d_0,h_0)=-\frac{\lambda^i}{\a}(\frac{h_0}{2}-\b),\ \ \ \ \forall i\in\Z.$$
And from $[d_i,e_0]\cdot1=0$ we see  that for any $i\in\Z$, $p_i(h_0)=i\r$ for some $\r\in\C$. Then in this case, $M\cong\Theta(\lambda,\a, \b, \r)$.
\end{proof}



\begin{thebibliography}{9999}

\bibitem{BM} P. Batra,  V. Mazorchuk, Blocks and modules for Whittaker pairs, {\it J. Pure Appl. Algebra} {\bf 215}, 1552--1568 (2011).

\bibitem{B} Y. Billig,  A category of modules for the full toroidal Lie algebra, {\it International Mathematics Research Notices}, 46pp (2006).


\bibitem{CTZ} Y.  Cai, H. Tan, K. Zhao, Module structure on $U(\mathfrak{h})$ for Kac-Moody algebras (in Chinese), {\it Sci. Sin. Math.} {\bf 47(11)}, 1491--1514 (2017).

\bibitem{CZ} Y. Cai, K. Zhao, Module structure on $U(\mathfrak{h})$ for basic Lie superalgebras, {\it Toyama Math. J.}  {\bf 37}, 55--72 (2015).

\bibitem{CH} H. Chen, J. Han, A class of non-weight modules over the Virasoro algebra, {\it arXiv:1712.01436v1}.

\bibitem{CG} H. Chen,  X. Guo, Non-weight Modules over the Heisenberg-Virasoro and $W(2,2)$ algebras, {\it J. Algebra Appl.} {\bf 16}, 1750097 (2017).


\bibitem{CG2} H. Chen,  X. Guo, A new family of modules over the Virasoro algebra, {\it J. Algebra} {\bf 457}, 73--105 (2016).


\bibitem{CC} Q. Chen,  Y. Cai, Modules over algebras related to the Virasoro algebra,  {\it Internat. J. Math.} {\bf 26}, 1550070 (2015).

\bibitem{CY} Q. Chen,  Y. Yao, Non-weight modules over algebras related to the Virasoro
algebra,  {\it J. Geom. Phys.} {\bf 134}, 11--18 (2018).

\bibitem{CL} S. Cheng,  N. Lam, Finite conformal modules over the $N=2,3,4$ superconformal algebras,  {\it J. Math. Phys}. {\bf 42}(2), 906--933 (2001).



\bibitem{EJ} S. Eswara Rao, C. Jiang, Classification of irreducible integrable representations for the full toroidal Lie algebras, {\it J. Pure Appl. Algebra} {\bf 200}, 71--85 (2005).


\bibitem{GHL} Y. Gao, N. Hu, D. Liu, Representations of the affine-Virasoro algebra of type $A_1$,
{\it J. Geom. Phys.} {\bf 106}, 102--107  (2016).


\bibitem{H} J. Han, Submodule structure of $\mathbb C[s,t]$ over $\mathcal{V}ir(0,b)$, {\it arXiv:1708.07272v2}.


\bibitem{HCS} J. Han, Q. Chen,  Y. Su, Modules over the algebras $\mathcal{V}ir(a,b)$,  {\it Linear Algebra Appl.} {\bf 515}, 11--23 (2017).




\bibitem{JY} C. Jiang, H. You,  Irreducible representations for the affine-Virasoro Lie algebras of type $B_{l}$,  {\it Chinese Ann. Math. Ser. B} {\bf  25}(3), 359--368 (2004).

\bibitem{K} V. G. Kac, Highest weight representations of conformal current algebras, {\it Symposium on Topological and Geometric Methods in Field Theory$,$ Espoo$,$ Finland$,$ World Scientific}, 3--16 (1986).


\bibitem{K1} G. Kuroki, Fock space representations of an affine Lie algebras and integral representations in the Wess-Zumino-Witten models, {\it Comm. Math. Phys.} {\bf 142}(3), 511--542 (1991).


\bibitem{LQ} X. Liu,  M.  Qian, Bosonic Fock representations of the affine-Virasoro algebra, {\it J. Phys. A} {\bf 27}(5), 131--136 (1994).


\bibitem{LGZ} R. L\"{u}, X. Guo, K. Zhao, Irreducible modules over the Virasoro algebra, {\it Doc. Math.}  {\bf 16}, 709--721 (2011).

\bibitem{LZ} R. L\"{u},  K.  Zhao, Irreducible Virasoro modules from irreducible Weyl modules, {\it J. Algebra} {\bf 414}, 271--287 (2014).




%


\bibitem{MW}  V. Mazorchuk, E. Weisner, Simple Virasoro modules induced from codimension one subalgebras of the positive part, {\it  Proc. Amer. Math. Soc}. {\bf 142}(11), 3695--3703 (2012).

\bibitem{MZ} V. Mazorchuk, K. Zhao, Simple Virasoro modules which are locally finite over a positive part, {\it Selecta Math. (N.S.)} {\bf 20}(3), 839--854 (2014).

\bibitem{N} J. Nilsson, Simple $\mathfrak{sl_{n+1}}$-module structures on  $U(\mathfrak{h})$, {\it J. Algebra} {\bf 424}, 294--329 (2015).

\bibitem{N1} J. Nilsson, $U(\mathfrak{h})$-free modules and coherent families, {\it J. Pure Appl. Algebra}  {\bf 220},  1475--1488 (2016).



\bibitem{TZ1} H. Tan, K. Zhao, Irreducible Virasoro modules from tensor products (II), {\it J. Algebra} {\bf 394}, 357--373 (2013).

\bibitem{TZ} H. Tan,  K.  Zhao, $\mathfrak{W_n^+}$ and $\mathfrak{W_n}$-module structures on  $U(\mathfrak{h_n})$, {\it J. Algebra} {\bf 424}, 257--375 (2015).



\bibitem{XH} L. Xia, N. Hu, Irreducible representations for Virasoro-toroidal Lie algebras, {\it J. Pure Appl. Algebra}  {\bf 194},  213--237 (2004).
\end{thebibliography}
\end{document}